\DeclareMathOperator{\tr}{tr} \DeclareMathOperator{\str}{str}
\DeclareMathOperator{\Tr}{Tr} \DeclareMathOperator{\Str}{Str}
\DeclareMathOperator{\inc}{inc}\DeclareMathOperator{\Inc}{Inc}
\DeclareMathOperator{\Id}{Id}
\newtheorem{prop}{Proposition}
\newtheorem{teo}{Theorem}
\begin{document}
\author{Paul A. Blaga}
\thanks{Partially supported by the Grant CEEX 130/2006 of the Romanian Ministry of Education and Research}
\dedicatory{Dedicated to Professor Nicolae Teleman, on his 65$^\text{th}$ birthday anniversary}
\address{University of Cluj-Napoca, Faculty of Mathematics and Computer Sciences\\
1, Kog\u{a}lniceanu Street\\
400084 Cluj-Napoca, Romania}
\email{pablaga@cs.ubbcluj.ro}
\title{The Dennis' Supertrace and the Hochschild Homology of Supermatrices}
\maketitle
\begin{abstract}
We construct, in this paper, a generalization of the Dennis trace (for matrices) to the case
of the supermatrices over an arbitrary (not necessarily commutative) superalgebra
with unit. By analogy with the ungraded case, we show how it is possible to use
this map to construct an isomorphism from the Hochschild homology of the
superalgebra to the Hochschild homology of the supermatrix algebra.
\end{abstract}
\section{The supertrace and the supercommutator}
We remind first a couple of things related to the $\mathbb{Z}_2$-grading of the
supermatrix algebra $M_{p,q}(R)$. For the general material regarding the superalgebra the reader should consult the classical books of Bartocci, Bruzzo and Hernandez-Ruiperez (\cite{bruzzo}) and Manin (\cite{manin}). A matrix $A\in M_{p,q}(R)$ is considered to be
homogeneous if it can be decomposed into blocks
\begin{equation}\label{one}
A=
\begin{pmatrix}
A_{11}&A_{12}\\ A_{21}&A_{22}
\end{pmatrix},
\end{equation}
where $A_{11}, A_{12}, A_{21}, A_{22}$ are matrices of type $(p,p), (p,q), (q,p), (q,q)$,
respectively and all the components of one of the matrix are homogeneous of the same
parity. Moreover, it is assumed that the elements of $A_{11}$ and $A_{22}$ have the same
parity and the same is true for the other pair of matrices. Now, a  matrix~(\ref{one})
satisfying this conditions is
\begin{itemize}
  \item \textit{even} if the elements of the diagonal blocks are even, while the elements
  from the blocks off the diagonal are odd;
  \item \textit{odd} if the elements of the diagonal blocks are odd, while the elements
  from the blocks off the diagonal are even.
\end{itemize}

Now, the \textit{supertrace} of matrices is the $R$-module morphism $\str:M_{p,q}(R)\to
R$, defined on homogeneous matrices~(\ref{one}) by
\begin{equation}\label{two}
\str(A)= \tr A_{11}+(-1)^{1+|A|}\tr A_{22}
\end{equation}
where $\tr$ is the ordinary trace of a matrix, while with  $|\;\cdot\;|$ it is denoted
the parity of an element. We shall use the same notation to denote the parity of an
element of the algebra $R$ and a matrix, because it will be always clear from the context
what kind of object we are dealing with.

It should be noted that, contrary to what one would expect, the supertrace coincides with
the ordinary trace of a matrix $A$ when the matrix $A$ is \textit{odd} and not even. On
the other hand, for matrices from $M_{p,0}(R)$ the supertrace is identical to the trace,
no matter what parity the matrices might have. Finally, we remark that $\str$ is, indeed,
an $R$-module morphism, in the sense that  not only it is linear, but also preserves the
parity.

Let, now, $A,B\in M_{p,q}(R)$  be two supermatrices. Their \textit{supercommutator} will
be
\begin{equation}\label{three}
\{A,B\}\doteq A\cdot B-(-1)^{|A|\cdot |B|}B\cdot A.
\end{equation}
Thus, if at least one of the two supermatrices is even, the supercommutator reduces to
the ordinary commutator of two matrices. There is a difference in sign only in the case
when both supermatrices are odd.

A very important property of the supercommutator, which will be useful also in the
following is that it is related rather ``nicely'' to the supertrace.

 Since the operations involved are either linear or bilinear, it is, clearly, enough to
make the computations on a system of  (homogeneous) generators of the algebra
$M_{p,q}(R)$. A very convenient such basis is constructed from matrices of the form
$E^{ij}(a)$, where $i,j\in\{1,\dots, p+q\}$, while $a$ is a homogeneous element of the
algebra $A$. Here
\begin{equation}\label{four}
E^{ij}_{kl}(a)=\delta_{kl}^{ij}\cdot a=
\begin{cases}
a&\text{if}\;\; i=k, j=l,\\ 0&\text{if}\;\; i\neq k\;\text{or}\; j\neq l.
\end{cases}
\end{equation}
The parity of a matrix of the form $E^{ij}(a)$ is related to the parity of the element
$a$ in a very simple manner: if $a$ is in the diagonal block components, than the two
objects have the same parity, otherwise their parity is opposed. More precisely,
\begin{equation}\label{five}
\left|E^{ij}(a)\right|=
\begin{cases}
|a|&\text{if}\quad i\leq p, j\leq p\quad\text{or}\quad i>p, j>p,\\ 1+|a|&\text{if}\quad
i\leq p, j>p\quad\text{or}\quad i>p, j\leq p.
\end{cases}
\end{equation}
It is  not difficult to see that the family of supermatrices
\begin{equation}\label{six}
\left\{E^{ij}(a)\;|\; 1\leq i,j\leq p+q, a\in R_0\;\text{or}\; a\in R_1 \right\}
\end{equation}
 is an
ideal in $M_{p,q}(R)$:
\begin{equation}\label{seven}
E^{ij}(a)\cdot E^{kl}(b)=
\begin{cases}
E^{il}(a\cdot b),&\text{if}\;k=l,\\ 0,&\text{if}\;k\neq l.
\end{cases}
\end{equation}
Now,
\begin{equation*}
\begin{split}
\left\{E^{ij}(a),E^{kl}(b)\right\}&=E^{ij}(a)\cdot
E^{kl}(b)-(-1)^{|E^{ij}(a)|\cdot|E^{kl}(b)|}E^{kl}(b)\cdot E^{ij}(a)=\\ &=
\begin{cases}
0,& j\neq k, i\neq l,\\ E^{il}(ab)&j=k, i\neq l,\\
-(-1)^{|E^{ij}(a)|+|E^{ki}(b)|}E^{kj}(ba),& j\neq k, i=l,\\
E^{ii}(ab)-(-1)^{|E^{ij}(a)|\cdot|E^{ji}(b)|}E^{jj}(ba),& j=k, i=l.
\end{cases}
\end{split}
\end{equation*}
We note first that the matrices $E^{ij}(a)$ with $i\neq j$ are commutators. Let us
compute now the supertrace of the supercommutator of two generating matrices, separately
for each combination of indices.

We get, obviously, $0$ if $j\neq k $ and $i\neq l$. If $j=k, i\neq l$, we obtain
\begin{equation}\label{eight}
\str \left\{E^{ij}(a),E^{jl}(b)\right\}=\str E^{il}(ab)=0.
\end{equation}
The same is true for the case $j\neq k, i=l$. The only interesting case is the last one.
Now we have
\begin{equation}\label{nine}
\str
\left\{E^{ij}(a),E^{ji}(b)\right\}=\str\left(E^{ii}(ab)-(-1)^{\left|E^{ij}(a)\right|\cdot\left|E^{ji}(b)\right|}E^{jj}(ba)\right)
\end{equation}
We have several subcases to consider here:
\begin{enumerate}[(i)]
\item Suppose we have $i=j$. In this case, we have $\left|E^{ii}(a)\right|=|a|$, $\left|E^{ii}(b)\right|=|b|$,
thus,
\begin{equation*}
\begin{split}
\left\{E^{ii}(a),E^{ii}(b)\right\}&=E^{ii}(ab)-(-1)^{|a|\cdot|b|}E^{ii}(ba)=E^{ii}(ab-(-1)^{|a|\cdot|b|}ba)\\
&=E^{ii}\left(\{a,b\}\right),
\end{split}
\end{equation*}
therefore
\begin{equation*}
\str \left\{E^{ii}(a),E^{ii}(b)\right\}=\str E^{ii}\left(\{a,b\}\right)=
\begin{cases}
\{a,b\},& i\leq p\\ (-1)^{1+|a|+|b|}\{a,b\},& i> p.
\end{cases}
\end{equation*}
\item $i<j\leq p$. In this case we have $\left|E^{ij}(a)\right|=|a|$, $\left|E^{ji}(b)\right|=|b|$, therefore
\begin{equation*}
\left\{E^{ij}(a),E^{ji}(b)\right\}=E^{ii}(ab)-(-1)^{|a|\cdot|b|}E^{jj}(ba),
\end{equation*}
hence
\begin{equation*}
\begin{split}
\str \left\{E^{ij}(a),E^{ji}(b)\right\}&=\str E^{ii}(ab)-(-1)^{|a|\cdot|b|}\str
E^{jj}(ba)=\\&=ab-(-1)^{|a|\cdot|b|}ba=\{a,b\}.
\end{split}
\end{equation*}
\item $i\leq p<j$. Now $\left|E^{ij}(a)\right|=1+|a|$, $\left|E^{ji}(b)\right|=1+|b|$, and then
\begin{equation*}
\left\{E^{ij}(a),E^{ji}(b)\right\}=E^{ii}(ab)-(-1)^{1+|a|+|b|+|a|\cdot|b|}E^{jj}(ba)
\end{equation*}
and
\begin{equation*}
\begin{split}
\str\left\{E^{ij}(a),E^{ji}(b)\right\}&=\str E^{ii}(ab)-(-1)^{1+|a|+|b|+|a|\cdot|b|}\str
E^{jj}(ba)=\\ &=ab-(-1)^{1+|a|+|b|+|a|\cdot|b|}\cdot(-1)^{1+|a|+|b|}ba=\\&=
ab-(-1)^{|a|\cdot |b|}ba=\{a,b\}.
\end{split}
\end{equation*}
\item $p<i<j$. In this situation, $\left|E^{ij}(a)\right|=|a|$, $\left|E^{ji}(b)\right|=|b|$, therefore
\begin{equation*}
\left\{E^{ij}(a),E^{ji}(b)\right\}=E^{ii}(ab)-(-1)^{|a|\cdot|b|}E^{jj}(ba),
\end{equation*}
but
\begin{equation*}
\begin{split}
\str \left\{E^{ij}(a),E^{ji}(b)\right\}&=\str E^{ii}(ab)-(-1)^{|a|\cdot|b|}\str
E^{jj}(ba)=\\&=(-1)^{1+|a|+|b|}\left(ab-(-1)^{|a|\cdot|b|}ba\right)=(-1)^{1+|a|+|b|}\{a,b\}.
\end{split}
\end{equation*}
\item $j<i\leq p$. This case is identical to the case (ii).
\item $j\leq p<i$. We have now, as in the case (iii), $|E^{ij}(a)|=1+|a|$,
$|E^{ji}(b)|=1+|b|$, therefore
\begin{equation*}
\left\{E^{ij}(a),E^{ji}(b)\right\}=E^{ii}(ab)-(-1)^{1+|a|+|b|+|a|\cdot|b|}E^{jj}(ba),
\end{equation*}
but
\begin{equation*}
\begin{split}
\str\left\{E^{ij}(a),E^{ji}(b)\right\}&=\str E^{ii}(ab)-(-1)^{1+|a|+|b|+|a|\cdot|b|}\str
E^{jj}(ba)=\\
&=(-1)^{1+|a|+|b|}ab-(-1)^{1+|a|+|b|+|a|\cdot|b|}ba=\\&=(-1)^{1+|a|+|b|}\{a,b\}.
\end{split}
\end{equation*}
\item $p<j<i$. This case is identical to the case (iv).
\end{enumerate}
\section{The Hochschild homology of superalgebras}
The Hochschild complex for superalgebras (Kassel, 1986), is very similar to the analogous
complex for ungraded case. Namely, the chain groups are, as in the classical case,
$C_m(R)=R^{\otimes m+1}$, where, of course, the tensor product should be understood in
the graded sense, while the face maps and degeneracies are given by
\begin{equation}
\delta^m_i(a_0\otimes\dots\otimes a_m)=a_0\otimes \dots \otimes a_i a_{i+1}\otimes\dots
a_n,\quad \text{if}\quad 0\leq i <m,
\end{equation}
\begin{equation}
\delta^m_m(a_0\otimes\dots\otimes a_m)=(-1)^{|a_m|(|a_0|+\dots+|a_{m-1})}a_ma+0\otimes
a_1\otimes\dots \otimes a_{m-1},
\end{equation}
\begin{equation}
s^m_i(a_0\otimes\dots\otimes a_m)=a_0\otimes\dots\otimes a_i\otimes 1\otimes
a_{i+1}\otimes\dots\otimes a_m,\quad 0\leq i\leq m.
\end{equation}
Now the differential is defined in the usual way, meaning $d^m:C_m(R)\to C_{m-1}(R)$,
\begin{equation}
d^m=\sum\limits_{i=0}^m(-1)^i\delta_i^m.
\end{equation}
and the Hochschild  homology of the superalgebra is just the homology of the complex
$(C(R),d)$. In particular, it is easy to see that for any superalgebra $R$ we have
\begin{equation}
H_0(R)=R/\{R,R\},
\end{equation}
where $\{R,R\}$ is the subspace generated by the supercommutators.
\section{The Dennis supertrace and its properties}
If $A^1$ and $A^2$ are two square matrices over an arbitrary algebra $R$, then their
product is
\begin{equation*}
\left(A^1\cdot A^2\right)_{ij}=\sum\limits_{k=1}^nA^1_{ik}\cdot A^2_{kj},
\end{equation*}
therefore, the trace of the product is
\begin{equation*}
\tr \left(A^1\cdot A^2\right)=\sum\limits_{i=1}^n\sum\limits_{k=1}^nA^1_{ik}\cdot
A^2_{ki}.
\end{equation*}
Completely analogously, the trace of the product of $m+1\geq 2$ matrices is
\begin{equation*}
\tr \left(A^0\cdot A^1\dots A^m\right)=\sum^{n}_{i_0=1}\sum A^0_{i_0i_1}\cdot
A^1_{i_1i_2}\dots A^{m-1}_{i_{m-1}i_m}A^m_{i_mi_0},
\end{equation*}
where the second sum is taken after all the possible values of the indices $i_1,\dots,
i_m\in \{1,\dots, n\}.$

Now, the very natural idea of Dennis was to define a generalized trace map (which is now
often called \textit{Dennis trace}),
\begin{equation*}
\Tr^m: M_n(R)^{\otimes m+1}\to R^{\otimes m+1},
\end{equation*}
putting
\begin{equation}\label{Tr}
\Tr^m \left(A^0\otimes A^1\otimes\dots \otimes A^m\right)=\sum^{n}_{i_0=1}\sum
A^0_{i_0i_1}\otimes A^1_{i_1i_2}\otimes \dots \otimes A^{m-1}_{i_{m-1}i_m}A^m_{i_mi_0},
\end{equation}
with the second summation sign having the same significance as above. Dennis used the
generalized trace to construct an isomorphism between the Hochschild homology of the
matrix algebra over an algebra $R$ and that of the algebra itself.

The Dennis' construction can be carried out also in the case of superalgebras if we
replace the trace with the supertrace and we pay attention to the signs.

Namely, it is easy to see that the supertrace of a product of two homogeneous
supermatrices of type $(p,q)$ over a superalgebra $R$
\begin{equation*}
\str \left(A^1\cdot A^2\right)=\sum\limits_{i=1}^p\sum\limits_{k=1}^{p+q}A^1_{ik}\cdot
A^2_{ki} +
(-1)^{1+|A^1|+|A^2|}\sum\limits_{i=p+1}^{p+q}\sum\limits_{k=1}^{p+q}A^1_{ik}\cdot
A^2_{ki},
\end{equation*}
while for $m+1$ supermatrices we have
\begin{equation*}
\begin{split}
\str \left(A^0\cdot A^1\dots A^m\right)&=\sum^{p}_{i_0=1}\sum A^0_{i_0i_1}\cdot
A^1_{i_1i_2}\dots A^{m-1}_{i_{m-1}i_m}A^m_{i_mi_0}+\\
&+(-1)^{1+|A^0|+\dots+|A^m|}\sum^{p+q}_{i_0=p+1}\sum A^0_{i_0i_1}\cdot A^1_{i_1i_2}\dots
A^{m-1}_{i_{m-1}i_m}A^m_{i_mi_0},
\end{split}
\end{equation*}
where, as above, the second sum in each term is taken after all the values of the indices
$i_1,\dots, i_m\in \{1,\dots, p+q\}$.

Now, it is clear that to have a consistent generalization of the Dennis trace for the
$\mathbb{Z}_2$-graded case we should put (for homogeneous supermatrices)
\begin{multline}\label{Str}
\Str^m \left(A^0\otimes A^1\otimes\dots\otimes A^m\right)=\sum^{p}_{i_0=1}\sum
A^0_{i_0i_1}\otimes A^1_{i_1i_2}\otimes\dots \otimes A^{m-1}_{i_{m-1}i_m}A^m_{i_mi_0}+\\
+(-1)^{1+|A^0|+\dots+|A^m|}\sum^{p+q}_{i_0=p+1}\sum A^0_{i_0i_1}\otimes
A^1_{i_1i_2}\otimes\dots \otimes A^{m-1}_{i_{m-1}i_m}A^m_{i_mi_0}.
\end{multline}
We shall call this generalized supertrace the \textit{Dennis supertrace map}.

It is convenient to work, as before, with the (homogeneous) generators $E^{ij}(a)$ of the
supermatrix algebra $M_{p,q}(R)$. The nice thing about them is that the Dennis supertace
can be written down very easily, for these generators, because, as one can see
immediately,
\begin{equation*}
\Str^m\left(E^{i_0j_0}(a_0)\otimes\dots\otimes E^{i_mj_m}(a_m)\right)\neq 0
\end{equation*}
if and only if we have
\begin{equation*}
j_0=i_1, j_1=i_2,\dots, j_{m-1}=i_m, j_m=i_0,
\end{equation*}
therefore we shall suppose all the time that these conditions are fulfilled. Now, it is
easy to see that
\begin{prop}
The Dennis supertrace can be written on the homogeneous generators as
\begin{equation}\label{Str1}
\Str^m\left(E^{i_0i_1}(a_0)\otimes\dots\otimes E^{i_mi_0}(a_m)\right)=
\begin{cases}
a_0\otimes\dots\otimes a_m, & i_0\leq p,\\
(-1)^{1+|a_0|+\dots+|a_m|}a_0\otimes\dots\otimes a_m,& i_0>p.
\end{cases}
\end{equation}
\end{prop}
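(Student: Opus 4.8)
The plan is to substitute the generators $A^k=E^{i_ki_{k+1}}(a_k)$ (with the cyclic convention $i_{m+1}=i_0$ built into the indexing) directly into the defining formula~(\ref{Str}) and to exploit the Kronecker-delta nature of the generators recorded in~(\ref{four}). To avoid a clash of symbols, I would rename the summation indices of~(\ref{Str}) as $l_0,\dots,l_m$, keeping the $i_k$ for the fixed indices of the generators. Each matrix entry then reads $A^k_{l_kl_{k+1}}=E^{i_ki_{k+1}}_{l_kl_{k+1}}(a_k)=\delta^{i_ki_{k+1}}_{l_kl_{k+1}}\,a_k$, which is nonzero only when $l_k=i_k$ and $l_{k+1}=i_{k+1}$. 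Running this requirement through all $m+1$ factors (the chain condition $j_k=i_{k+1}$ guarantees the overlapping constraints are consistent) forces $l_k=i_k$ for every $k$, so the entire double sum collapses to a single surviving term, in which each entry equals $a_k$ and the tensor product reads $a_0\otimes\dots\otimes a_m$.

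Next I would locate which of the two outer sums in~(\ref{Str}) this surviving term belongs to. Since the outer index is $l_0$ and the survivor has $l_0=i_0$, the term sits in the first sum (coefficient $+1$, index range $1\leq l_0\leq p$) exactly when $i_0\leq p$, and in the second sum (coefficient $(-1)^{1+|A^0|+\dots+|A^m|}$, index range $p+1\leq l_0\leq p+q$) exactly when $i_0>p$. This already reproduces the case distinction of~(\ref{Str1}); it remains only to rewrite the sign in the second case in terms of the parities $|a_k|$ rather than $|A^k|$.

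The decisive step, and the one I expect to require the most care, is this conversion of signs. By the parity rule~(\ref{five}) one has $|A^k|=|a_k|+\epsilon_k$, where $\epsilon_k\in\{0,1\}$ equals $1$ precisely when $i_k$ and $i_{k+1}$ lie in opposite blocks (one $\leq p$, the other $>p$). Summing gives $\sum_{k=0}^m|A^k|\equiv\sum_{k=0}^m|a_k|+\sum_{k=0}^m\epsilon_k\pmod 2$. Assigning $b_k=0$ when $i_k\leq p$ and $b_k=1$ when $i_k>p$, one has $\epsilon_k\equiv b_k+b_{k+1}$, whence the cyclic sum $\sum_{k=0}^m\epsilon_k\equiv\sum_k b_k+\sum_k b_{k+1}=2\sum_k b_k\equiv 0\pmod 2$; geometrically, the closed cycle $i_0\to i_1\to\dots\to i_m\to i_0$ returns to its starting block and so crosses between blocks an even number of times. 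Therefore $(-1)^{1+|A^0|+\dots+|A^m|}=(-1)^{1+|a_0|+\dots+|a_m|}$, which is exactly the sign appearing in~(\ref{Str1}) for $i_0>p$. Combining the three steps yields the stated formula.
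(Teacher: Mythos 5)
Your proof is correct, and it reaches the same crux as the paper --- namely the identity $\left|E^{i_0i_1}(a_0)\right|+\dots+\left|E^{i_mi_0}(a_m)\right|=|a_0|+\dots+|a_m|$ --- but establishes it by a genuinely different route. The paper treats the collapse of the sums in~(\ref{Str}) as immediate and devotes its proof entirely to this parity identity, which it obtains algebraically: the product $E^{i_0i_1}(a_0)\cdots E^{i_mi_0}(a_m)$ equals the diagonal generator $E^{i_0i_0}(a_0\cdots a_m)$, so additivity of parity under multiplication on one side, and the diagonal-entry rule $\left|E^{i_0i_0}(x)\right|=|x|$ on the other, force the two sums of parities to agree. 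You instead prove the same identity combinatorially, writing $\left|E^{i_ki_{k+1}}(a_k)\right|=|a_k|+\epsilon_k$ with $\epsilon_k\equiv b_k+b_{k+1}$ for block indicators $b_k$, and observing that the cyclic sum $\sum_k\epsilon_k\equiv 2\sum_k b_k\equiv 0\pmod 2$ because the closed index cycle crosses between the two blocks an even number of times. Your argument is more elementary and makes the role of cyclicity explicit (it visibly fails for a non-closed chain of indices), while the paper's argument is shorter and reuses the multiplicativity of parity already built into the superalgebra structure; you also spell out the Kronecker-delta collapse of the double sum and the identification of which outer sum survives, which the paper leaves implicit. No gaps.
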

\begin{proof}
Clearly, the only thing that calls for a justification is the fact that
\begin{equation*}
|E^{i_0i_1}(a_0)|+\dots+|E^{i_mi_0}(a_m)|=|a_0|+\dots+|a_m|.
\end{equation*}
But this follows immediately if we notice that
\begin{equation*}
E^{i_0i_1}(a_0)\cdot E^{i_1i_2}(a_1)\dots E^{i_mi_0}(a_m)=E^{i_0i_0}(a_0\cdot a_1\dots
a_m),
\end{equation*}
therefore, on the one hand
\begin{equation*}
|E^{i_0i_1}(a_0)\cdot E^{i_1i_2}(a_1)\dots
E^{i_mi_0}(a_m)|=|E^{i_0i_1}(a_0)|+\dots+|E^{i_mi_0}(a_m)|
\end{equation*}
and, on the other hand,
\begin{equation*}
|E^{i_0i_1}(a_0)\cdot E^{i_1i_2}(a_1)\dots E^{i_mi_0}(a_m)|=|E^{i_0i_0}(a_0\cdot a_1\dots
a_m)|=|a_0\cdot a_1\dots a_m|=|a_0|+\dots+|a_m|,
\end{equation*}
where, when we wrote the second equality, we took into account the fact that the only
non-vanishing element of the matrix $E^{i_0i_0}(a_0\cdot a_1\dots a_m)$ is on the
diagonal, therefore the parity of the matrix is equal to the parity of that element.
\end{proof}
\begin{teo}
The family of mappings $\left\{\Str^m:M_{p,q}(R)^{\otimes m+1}\to R^{\otimes
m+1}\right\}$ defines a chain morphism between the Hochschild complex of the algebra
$M_{p,q}(R)$ and the Hochschild complex of the ground algebra $R$.
\end{teo}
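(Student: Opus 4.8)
The plan is to show that $\Str$ commutes with the Hochschild differential, and since $d^m=\sum_{i=0}^m(-1)^i\delta_i^m$ it suffices to establish the face-by-face relations
\[
\delta_i^m\circ\Str^m=\Str^{m-1}\circ\delta_i^m,\qquad 0\le i\le m,
\]
where on the left the faces are those of $C(R)$ and on the right those of $C(M_{p,q}(R))$. Because every map involved is linear and, by the Proposition, $\Str$ vanishes on any generator violating the cyclic index conditions $j_0=i_1,\dots,j_m=i_0$, it is enough to test these identities on the homogeneous generators $E^{i_0i_1}(a_0)\otimes\dots\otimes E^{i_mi_0}(a_m)$ attached to a cyclic tuple $(i_0,i_1,\dots,i_m,i_0)$, using the product rule~(\ref{seven}) and the evaluation~(\ref{Str1}).

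First I would dispose of the degenerate faces $0\le i<m$. Here $\delta_i^m$ carries no sign and simply multiplies two adjacent factors; by~(\ref{seven}) the product $E^{i_ii_{i+1}}(a_i)\cdot E^{i_{i+1}i_{i+2}}(a_{i+1})=E^{i_ii_{i+2}}(a_ia_{i+1})$ merges two consecutive arrows of the index cycle into one, so the reduced tuple is again cyclic and the nonvanishing condition is preserved on both sides. Applying~(\ref{Str1}) then yields $a_0\otimes\dots\otimes a_ia_{i+1}\otimes\dots\otimes a_m$ up to the prefactor determined by whether $i_0\le p$; since $|a_ia_{i+1}|=|a_i|+|a_{i+1}|$ the total parity $|a_0|+\dots+|a_m|$ is unchanged, the two prefactors agree, and this case is immediate.

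The genuine obstacle is the last face $\delta_m^m$, which both permutes cyclically and introduces the Koszul sign $(-1)^{|A^m|(|A^0|+\dots+|A^{m-1}|)}$. On the supermatrix side this sign is computed with the \emph{matrix} parities $|E^{i_ki_{k+1}}(a_k)|$, whereas the sign produced on the $R$-side by $\delta_m^m$ after $\Str^m$ uses the \emph{element} parities $|a_k|$; moreover the prefactor of~(\ref{Str1}) switches from depending on $i_0$ to depending on the new leading row index $i_m$. Reconciling these three sign contributions is the heart of the matter. The key device is the block-parity identity read off from~(\ref{five}): writing $b(i)=0$ for $i\le p$ and $b(i)=1$ for $i>p$, one has $|E^{ij}(a)|=|a|+b(i)+b(j)\pmod 2$. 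Substituting this into the Koszul exponent and using that along the cycle each $b(i_k)$ occurs an even number of times, the matrix-parity Koszul sign collapses to the element-parity Koszul sign $(-1)^{|a_m|(|a_0|+\dots+|a_{m-1}|)}$ times a correction $(-1)^{(b(i_0)+b(i_m))(1+S)}$, where $S=|a_0|+\dots+|a_m|$.

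Finally, combining this correction with the factor $(-1)^{b(i_m)(1+S)}$ coming from the prefactor of $\Str^{m-1}$ at the new leading index $i_m$ leaves exactly $(-1)^{b(i_0)(1+S)}$, which is precisely the prefactor of $\Str^m$ at $i_0$ appearing on the other side (the $R$-side face $\delta_m^m$ applied after $\Str^m$). Hence the two faces agree. Since $\Str$ commutes with all faces $\delta_i^m$, it commutes with $d^m=\sum_{i=0}^m(-1)^i\delta_i^m$, so the family $\{\Str^m\}$ is a chain morphism of the Hochschild complexes, as claimed.
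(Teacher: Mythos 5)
Your proposal is correct and follows essentially the same route as the paper: reduce to the cyclic generators $E^{i_0i_1}(a_0)\otimes\dots\otimes E^{i_mi_0}(a_m)$, dispose of the faces $\delta_k^m$ with $k<m$ via $|a_ka_{k+1}|=|a_k|+|a_{k+1}|$, and then match the Koszul signs for the last face $\delta_m^m$. The only difference is presentational: where the paper verifies the $\delta_m^m$ sign identity in four separate cases according to whether $i_0$ and $i_m$ exceed $p$, you encode the block parity as $b(i)$ and do a single mod-2 computation, and your cancellation $(-1)^{(b(i_0)+b(i_m))(1+S)}\cdot(-1)^{b(i_m)(1+S)}=(-1)^{b(i_0)(1+S)}$ checks out against the paper's case-by-case identities.
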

\begin{proof}
Clearly, each Dennis supertrace is a linear map. All we have to do is to show that the
Dennis traces commute with the face maps of the two Hochschild complexes, i.e. with the
operators $\delta^m_k$, $k=0,\dots, m$. Again, it is enough to verify for elements of the
form
\begin{equation*}
E^{i_0i_1}\otimes\dots\otimes E^{i_mi_0}.
\end{equation*}
We shall discuss first the case $k<m$.
 As we saw above, the Dennis supetrace, calculated on such an element is
\begin{equation*}
\Str^m\left(E^{i_0i_1}(a_0)\otimes\dots\otimes E^{i_mi_0}(a_m)\right)=
\begin{cases}
a_0\otimes\dots\otimes a_m, & i_0\leq p,\\
(-1)^{1+|a_0|+\dots+|a_m|}a_0\otimes\dots\otimes a_m,& i_0>p.
\end{cases}
\end{equation*}
Thus, we have
\begin{multline*}
\delta^m_k \Str^m\left(E^{i_0i_1}(a_0)\otimes\dots\otimes E^{i_mi_0}(a_m)\right)=\\
=
\begin{cases}
a_0\otimes\dots a_ka_{k+1}\otimes\dots\otimes a_m, & i_0\leq p,\\
(-1)^{1+|a_0|+\dots+|a_m|}a_0\otimes\dots a_ka_{k+1}\otimes\dots\otimes a_m,& i_0>p.
\end{cases}
\end{multline*}
On the other hand,
\begin{multline*}
\delta^m_k\left(E^{i_0i_1}(a_0)\otimes\dots\otimes E^{i_mi_0}(a_m)\right)=\\
E^{i_0i_1}(a_0)\otimes\dots E^{i_ki_{k+1}}(a_k)E^{i_{k+1}i_{k+2}}(a_{k+1})\otimes\dots
\otimes E^{i_mi_0}(a_m)=\\ E^{i_0i_1}(a_0)\otimes\dots
E^{i_ki_{k+2}}(a_ka_{k+1})\otimes\dots \otimes E^{i_mi_0}(a_m),
\end{multline*}
therefore
\begin{multline*}
\Str^{m-1}\delta^m_k\left(E^{i_0i_1}(a_0)\otimes\dots\otimes E^{i_mi_0}(a_m)\right)=\\
=
\begin{cases}
a_0\otimes\dots a_ka_{k+1}\otimes\dots\otimes a_m, & i_0\leq p,\\
(-1)^{1+|a_0|+\dots+|a_m|}a_0\otimes\dots a_ka_{k+1}\otimes\dots\otimes a_m,& i_0>p.
\end{cases}
=\\ =\delta^m_k \Str^m\left(E^{i_0i_1}(a_0)\otimes\dots\otimes E^{i_mi_0}(a_m)\right),
\end{multline*}
where we have use the fact that $|a_ka_{k+1}|=|a_k|+|a_{k+1}|$. The only case  that needs
extra work is the case $k=m$. In this case we have, on the one hand,
\begin{multline*}
\delta^m_m\Str^m\left(E^{i_0i_1}(a_0)\otimes\dots\otimes E^{i_mi_0}(a_m)\right)=\\
\begin{cases}
(-1)^{|a_m|\sum\limits_{i=0}^{m-1}|a_i|}a_0\otimes\dots a_ka_{k+1}\otimes\dots\otimes
a_m, & i_0\leq p,\\
(-1)^{1+\sum\limits_{i=0}^{m}|a_i|+|a_m|\sum\limits_{i=0}^{m-1}|a_i|}a_0\otimes\dots
a_ka_{k+1}\otimes\dots\otimes a_m,& i_0>p.
\end{cases}
\end{multline*}
On the other hand,
\begin{multline*}
\Str^{m-1}\delta^m_m\left(E^{i_0i_1}(a_0)\otimes\dots\otimes E^{i_mi_0}(a_m)\right)=\\
=\Str^{m-1}\left[(-1)^{\left|E^{i_mi_0}(a_m)\right|\sum\limits_{l=0}^{m-1}\left|E^{i_li_{l+1}}(a_l)\right|}E^{i_mi_1}(a_ma_0)\otimes\dots\otimes
E^{i_{m-1}i_m}(a_{m-1})\right]=\\
=
\begin{cases}
(-1)^{\left|E^{i_mi_0}(a_m)\right|\sum\limits_{l=0}^{m-1}\left|E^{i_li_{l+1}}(a_l)\right|}a_ma_0\otimes\dots
\otimes a_{m-1},& i_m\leq p,\\
(-1)^{1+\left|E^{i_mi_0}(a_m)\right|\sum\limits_{l=0}^{m-1}\left|E^{i_li_{l+1}}(a_l)\right|+\sum\limits_{k=0}^m|a_k|}a_ma_0\otimes\dots
\otimes a_{m-1},& i_m> p.
\end{cases}
\end{multline*}
Now, to prove that $\Str^{m-1}\delta^m_m=\delta^m_m\Str^m$, we have to consider several
cases.
\begin{enumerate}[(i)]
\item $i_0\leq p, i_m\leq p$. In this case we have to prove that
\begin{equation*}
|a_m|\sum\limits_{k=0}^{m-1}|a_k|=\left|E^{i_mi_0}(a_m)\right|\sum\limits_{l=0}^{m-1}\left|E^{i_li_{l+1}}(a_l)\right|.
\end{equation*}
But, since we have $i_0, i_m\leq p$, it follows that $|E^{i_mi_0}(a_m)|=|a_m|$ and we
have already seen that
\begin{equation*}
\sum\limits_{k=0}^{m-1}|a_k|=\sum\limits_{l=0}^{m-1}\left|E^{i_li_{l+1}}(a_l)\right|.
\end{equation*}
\item $i_0\leq p, i_m> p$. Now the identity we have to prove is
\begin{equation*}
|a_m|\sum\limits_{k=0}^{m-1}|a_k|=\underbrace{1+\sum\limits_{k=0}^{m}|a_k|+\left|E^{i_mi_0}(a_m)\right|\sum\limits_{l=0}^{m-1}
\left|E^{i_li_{l+1}}(a_l)\right|}_{RHS}.
\end{equation*}
In this case, $\left|E^{i_mi_0}(a_m)\right|=1+|a_m|$. Moreover, we have
\begin{equation*}
\sum\limits_{l=0}^{m-1}
\left|E^{i_li_{l+1}}(a_l)\right|=\sum\limits_{l=0}^{m}\left|E^{i_li_{l+1}}(a_l)\right|-\left|E^{i_mi_0}(a_m)\right|=\sum\limits_{k=0}^{m}|a_k|-1-|a_m|=
1+\sum\limits_{k=0}^{m-1}|a_k|.
\end{equation*}
It follows, therefore, that
\begin{multline*}
RHS=1+\sum\limits_{k=0}^{m}|a_k|+\big(1+|a_m|\big)\left(1+\sum\limits_{k=0}^{m-1}|a_k|\right)=\\
1+\sum\limits_{k=0}^{m}|a_k|+1+\sum\limits_{k=0}^{m-1}|a_k|+|a_m|+|a_m|\sum\limits_{k=0}^{m-1}|a_k|=
|a_m|\sum\limits_{k=0}^{m-1}|a_k|,
\end{multline*}
so the identity is proven.
\item $i_0>p, i_m\leq p$. We have to show that
\begin{equation*}
1+\sum\limits_{k=0}^{m}|a_k|+|a_m|\sum\limits_{k=0}^{m-1}|a_k|=
\underbrace{\left|E^{i_mi_0}(a_m)\right|\sum\limits_{l=0}^{m-1}\left|E^{i_li_{l+1}}(a_l)\right|}_{RHS}.
\end{equation*}
The same reasoning we did before ensure us that we have $\left|E^{i_mi_0}(a_m)\right|=1+|a_m|$ and
\begin{equation*}
\sum\limits_{l=0}^{m-1} \left|E^{i_li_{l+1}}(a_l)\right|= 1+\sum\limits_{k=0}^{m-1}|a_k|,
\end{equation*}
hence
\begin{equation*}
\begin{split}
RHS&=(1+|a_m|)\left(1+\sum\limits_{k=0}^{m-1}|a_k|\right)=1+\sum\limits_{k=0}^{m-1}
|a_k|+|a_m|\sum\limits_{k=0}^{m-1}|a_k|+|a_m|=\\
&=1+\sum\limits_{k=0}^{m}|a_k|+|a_m|\sum\limits_{k=0}^{m-1}|a_k|,
\end{split}
\end{equation*}
so we are done.
\item $i_0>p, i_m>p$. The required identity reads
\begin{equation*}
1+\sum\limits_{k=0}^{m}|a_k|+|a_m|\sum\limits_{k=0}^{m-1}|a_k|=
1+\sum\limits_{k=0}^{m-1}|a_k|+\left|E^{i_mi_0}(a_m)\right|\sum\limits_{l=0}^{m-1}\left|E^{i_li_{l+1}}(a_l)\right|
\end{equation*}
or, which is the same,
\begin{equation*}
|a_m|\sum\limits_{k=0}^{m-1}|a_k|=\left|E^{i_mi_0}(a_m)\right|\sum\limits_{l=0}^{m-1}\left|E^{i_li_{l+1}}(a_l)\right|,
\end{equation*}
which is obvious, since in this case $\left|E^{i_mi_0}(a_m)\right|=|a_m|$.
\end{enumerate}
\end{proof}

It is pretty clear that the Dennis trace maps are onto. In fact, we can consider the map
\begin{equation*}
\inc:R\to M_{p,q}(R)
\end{equation*}
given by $\inc (a)=E^{11}(a)$. This map, which is, obviously, a linear morphism
($\left|E^{11}(a)\right|=|a|$), can be extended, for each natural $m$, to a morphism
\begin{equation*}
\Inc^m:R^{\otimes m+1}\to M_{p,q}(R)^{\otimes m+1},
\end{equation*}
\begin{equation*}
\Inc^m(a_0\otimes\dots\otimes a_m)=E^{11}(a_0)\otimes\dots \otimes E^{11}(a_m).
\end{equation*}
It can be shown immediately that
\begin{prop}
The family of maps $\left\{\Inc^m:R^{\otimes m+1}\to M_{p,q}(R)^{\otimes m+1},\;\; m\in
\mathbb{N} \right\}$, is a chain map from the Hochschild complex of $R$ to the Hochschild
complex of $M_{p,q}(R)$, which is a splitting of the Dennis supertrace.
\end{prop}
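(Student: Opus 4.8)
The plan is to establish the two assertions separately: first that each $\Inc^m$ is compatible with the Hochschild differentials, and then that $\Str^m\circ\Inc^m=\Id$. Since the differential $d^m$ is the alternating sum $\sum_k(-1)^k\delta^m_k$, for the chain-map property it suffices to check that $\Inc^{m-1}\circ\delta^m_k=\delta^m_k\circ\Inc^m$ for each $k=0,\dots,m$. The entire verification rests on two elementary facts already recorded above: the multiplication rule $E^{11}(a)\cdot E^{11}(b)=E^{11}(ab)$ from~(\ref{seven}), and the parity identity $\left|E^{11}(a)\right|=|a|$, which follows from~(\ref{five}) because both indices of $E^{11}$ are $\leq p$. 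The decisive simplification, compared with the computations for the supertrace, is that $E^{11}$ takes its values entirely in the even diagonal block, so \emph{no} parity shift ever intervenes and all of the sign bookkeeping collapses.

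For $k<m$ the argument is immediate: applying $\delta^m_k$ to $\Inc^m(a_0\otimes\dots\otimes a_m)=E^{11}(a_0)\otimes\dots\otimes E^{11}(a_m)$ merges the two adjacent factors into $E^{11}(a_k)\cdot E^{11}(a_{k+1})=E^{11}(a_ka_{k+1})$, which is exactly $\Inc^{m-1}$ applied to $a_0\otimes\dots\otimes a_ka_{k+1}\otimes\dots\otimes a_m$, i.e. to $\delta^m_k(a_0\otimes\dots\otimes a_m)$. The only case carrying a sign is $k=m$. On the ground algebra, $\delta^m_m$ produces the factor $(-1)^{|a_m|(|a_0|+\dots+|a_{m-1}|)}$; on the matrix side the same face map produces $(-1)^{\left|E^{11}(a_m)\right|(\left|E^{11}(a_0)\right|+\dots+\left|E^{11}(a_{m-1})\right|)}$, and since each $\left|E^{11}(a_k)\right|=|a_k|$ the two exponents are literally equal. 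Combined with $E^{11}(a_m)\cdot E^{11}(a_0)=E^{11}(a_ma_0)$, this gives $\delta^m_m\circ\Inc^m=\Inc^{m-1}\circ\delta^m_m$, so $\Inc$ is a chain map.

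It remains to see that $\Inc$ splits $\Str$, i.e. that $\Str^m\circ\Inc^m=\Id$. The element $\Inc^m(a_0\otimes\dots\otimes a_m)=E^{11}(a_0)\otimes\dots\otimes E^{11}(a_m)$ has all its indices equal to $1$, so the non-vanishing condition $j_0=i_1,\dots,j_m=i_0$ of the preceding Proposition holds, with $i_0=1\leq p$. Reading off the upper branch of~(\ref{Str1})---which carries no sign precisely because $i_0\leq p$---gives $\Str^m\left(E^{11}(a_0)\otimes\dots\otimes E^{11}(a_m)\right)=a_0\otimes\dots\otimes a_m$, whence $\Str^m\circ\Inc^m=\Id_{R^{\otimes m+1}}$. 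The only point that demands any attention is the parity accounting in the $k=m$ face map; but, as noted, the uniform identity $\left|E^{11}(a_k)\right|=|a_k|$ forces the two sign exponents to coincide on the nose, so no genuine obstacle arises.
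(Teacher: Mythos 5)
Your proposal is correct; the paper itself offers no proof of this proposition (it is dismissed with ``It can be shown immediately that''), and your verification --- reducing everything to $E^{11}(a)\cdot E^{11}(b)=E^{11}(ab)$ and $\left|E^{11}(a)\right|=|a|$, then reading off the $i_0\leq p$ branch of the supertrace formula --- is exactly the argument the author intends. Nothing is missing.
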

\section{The Hochschild homology of $M_{p,q}(R)$}
$\Inc$ is a right inverse of the Dennis supertrace, but, obviously, it is not, also, a
right inverse, so the Hochschild \textit{complexes} of $M_{p,q} (R)$ and $R$ are not
isomorphical. We shall prove that, however, the supertrace induces an isomorphism in
homology. To prove this, it is enough to verify that $\Inc$ is a left quasi-inverse of
the supetrace, i.e.
\begin{teo}
There is a chain homotopy $h:C(M_{p,q}(R))\to C(M_{p,q}(R))$ such that
\begin{equation*}
d\circ h+h\circ d= \Id-\Inc\circ\Str.
\end{equation*}
\end{teo}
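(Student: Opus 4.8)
The plan is to exhibit $h$ explicitly on the homogeneous generators $E^{ij}(a)$ and to verify the homotopy identity there, since $d$, $h$, $\Str$ and $\Inc$ are all linear. Two preliminary observations organize the computation. First, by the preceding proposition $\Str\circ\Inc=\Id$ on $C(R)$, so $P:=\Inc\circ\Str$ is an idempotent chain endomorphism of $C(M_{p,q}(R))$, and the theorem amounts to contracting $\Id-P$. Second, $P$ sends a generating chain $E^{i_0i_1}(a_0)\otimes\dots\otimes E^{i_mi_0}(a_m)$ either to $0$ (when the cyclic index condition of the proposition fails) or to $\pm\,E^{11}(a_0)\otimes\dots\otimes E^{11}(a_m)$, with exactly the sign recorded in~(\ref{Str1}). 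Thus the homotopy must, through its boundary, strip every matrix index down to the basepoint $1$ while reproducing precisely that sign.

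The homotopy I would use is the graded refinement of the classical contracting homotopy for the Dennis trace, and its shape is already dictated by degree $0$. There $h_{-1}=0$ and $d^0=0$, so the identity reduces to $d^1 h_0=\Id-\Inc\circ\Str$ on $C_0=M_{p,q}(R)$. One checks directly that
\[
h_0\big(E^{ij}(a)\big)=E^{i1}(a)\otimes E^{1j}(1)
\]
works: $d^1$ gives the supercommutator $\{E^{i1}(a),E^{1j}(1)\}$, the product $E^{i1}(a)E^{1j}(1)=E^{ij}(a)$ supplies the identity term, while $E^{1j}(1)E^{i1}(a)$ vanishes unless $i=j$, in which case it equals $E^{11}(a)$; the accompanying sign $(-1)^{|E^{i1}(a)|\,|E^{1i}(1)|}$ is $+1$ for $i\le p$ and $(-1)^{1+|a|}$ for $i>p$ by~(\ref{five}), which is exactly~(\ref{Str1}) at $m=0$. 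In general I would seek $h=\sum_{s}(-1)^{s}h_s$, where each $h_s$ splits one tensor slot through the basepoint, $E^{i_sj_s}(a_s)\mapsto E^{i_s1}(a_s)\otimes E^{1j_s}(1)$, and routes the trailing indices toward $1$, all decorated with the Koszul signs forced by~(\ref{five}).

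To verify $d\circ h+h\circ d=\Id-P$ I would substitute the generators and expand every face $\delta^m_k$ by the multiplication rule~(\ref{seven}). The inner face acting on a split slot recombines $E^{i_s1}(a_s)\,E^{1j_s}(1)=E^{i_sj_s}(a_s)$, which restores the original chain and yields the $\Id$ term; the remaining inner faces of $h_s$ match, up to sign, the faces of $h_{s\pm1}$ and of $h\circ d$, so the bulk of the terms should cancel telescopically. The only surviving boundary contributions are those of the cyclic face $\delta^m_m$ together with the basepoint factors $E^{1\bullet}(1)$; these collapse all indices to $1$ and must assemble into $-\,\Inc\circ\Str$ with the sign of~(\ref{Str1}).

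The main obstacle is the sign bookkeeping in the graded setting, and it is genuinely delicate: a naive alternating sum of the split operators already fails in degree $1$, because the junk produced by $\delta^m_m$ on slot $s$ does not cancel unless it is matched against a compatible contribution from slot $s+1$. Pinning down the $h_s$ and their signs so that (i) consecutive cyclic-face terms cancel in pairs, (ii) the parity shift $|E^{ij}(a)|=|a|$ or $1+|a|$ of~(\ref{five}) is absorbed when an index is routed across the $p\,|\,q$ block boundary through the basepoint $1\le p$, and (iii) the residue is $\Inc\circ\Str$ carrying precisely the factor $(-1)^{1+|a_0|+\dots+|a_m|}$ of~(\ref{Str1}) in the regime $i_0>p$, is the crux of the argument. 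Once the signs are fixed, the verification is a finite case analysis over $i_0\le p$ and $i_0>p$, entirely parallel to the proof of the preceding theorem; existence of such an $h$ is in any case guaranteed by the Morita invariance of Hochschild homology, so the only real content is producing the explicit formula with correct signs.
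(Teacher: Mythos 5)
Your overall strategy is the right one and coincides with the paper's: work on the generators $E^{i_0i_1}(a_0)\otimes\dots\otimes E^{i_mi_0}(a_m)$, build an explicit contracting homotopy $h=\sum_l(-1)^lh_l$ that threads the indices through the basepoint $1$, and let the cyclic face $\delta^{m+1}_{m+1}$ produce the $\Inc\circ\Str$ term with the sign of~(\ref{Str1}). Your degree-$0$ computation, $h_0(E^{ij}(a))=E^{i1}(a)\otimes E^{1j}(1)$, is exactly the paper's and is verified correctly.

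However, there is a genuine gap: you never actually define $h_l$ for $l\geq 1$, and the shape you sketch is not the right operator. You describe $h_s$ as splitting the single slot $s$ via $E^{i_sj_s}(a_s)\mapsto E^{i_s1}(a_s)\otimes E^{1j_s}(1)$ while leaving the other slots alone; with that definition the matrix indices of slots $0,\dots,s-1$ no longer interlock with the basepoint, and the simplicial identities you need (cancellation of inner faces against $h\circ d$) fail. The correct operator collapses \emph{all} leading slots: $h_l$ sends the chain to $E^{i_01}(a_0)\otimes E^{11}(a_1)\otimes\dots\otimes E^{11}(a_l)\otimes E^{1i_{l+1}}(1)\otimes E^{i_{l+1}i_{l+2}}(a_{l+1})\otimes\dots\otimes E^{i_mi_0}(a_m)$, with \emph{no} extra Koszul decoration. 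Your worry that ``a naive alternating sum already fails in degree $1$'' is an artifact of your single-slot splitting; for the correct $h_l$ the plain alternating sum works, because the parity of the first $l+2$ factors of $h_l(x)$ equals mod $2$ the parity of the first $l+1$ factors of $x$ (the indicator terms $[i_j>p]$ for $1\leq j\leq l$ cancel in pairs by~(\ref{five})), so the Koszul sign of $\delta^{m+1}_{m+1}\circ h_l$ matches that of $h_l\circ\delta^m_m$ automatically. Since you explicitly defer ``pinning down the $h_s$ and their signs'' --- which is the entire content of the proof --- the argument is incomplete. Finally, the closing appeal to Morita invariance of Hochschild homology to guarantee existence of $h$ is circular: this theorem \emph{is} the Morita-invariance statement for supermatrices, and in any case Morita invariance would only give an isomorphism on homology, not the specific homotopy $d\circ h+h\circ d=\Id-\Inc\circ\Str$.
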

\begin{proof}
We shall define the homotopy exactly as in the classical (non-graded) case and we shall
check that it does the job equally in the supercase. Thus, let us consider
\begin{equation*}
h=\sum^{m}_{l=0}(-1)^lh_l:M_{p,q}(R)^{\otimes m+1}\to M_{p,q}(R)^{\otimes m+2},
\end{equation*}
with
\begin{multline*}
h_l\left(E^{i_0i_1}(a_0)\otimes\dots\otimes E^{i_mi_0}(a_m)\right)=\\
=E^{i_01}(a_0)\otimes E^{11}(a_1) \otimes\dots\otimes E^{11}(a_l)\otimes
E^{1i_{l+1}}(1)\otimes E^{i_{l+1}i_{l+2}}(a_{l+1})\otimes\dots \otimes E^{i_mi_0}(a_m)
\end{multline*}
Let us, verify, first, that it works for the particular case of $m=0$. We have
\begin{equation*}
h_0\left(E^{i_0j_0}(a_0)\right)=E^{i_01}(a_0)\otimes E^{1j_0}(1).
\end{equation*}
\begin{multline*}
d^1\circ h_0\left(E^{i_0j_0}(a_0)\right)= \delta^1_0\circ
h_0\left(E^{i_0j_0}(a_0)\right)-\delta^1_1\circ h_0\left(E^{i_0j_0}(a_0)\right)=\\
=\delta^1_0\left(E^{i_01}(a_0)\otimes
E^{1j_0}(1)\right)-\delta^1_1\left(E^{i_01}(a_0)\otimes E^{1j_0}(1)\right)=\\
=
\begin{cases}
E^{i_0j_0}(a_0)&i_0\neq j_0\\ E^{i_0i_0}(a_0)-(-1)^{\left|E^{i_01}(a_0)\right|\cdot
\left|E^{1i_0}(1)\right|}E^{11}(a_0)&i_0=j_0
\end{cases}=\\
=
\begin{cases}
E^{i_0j_0}(a_0)&i_0\neq j_0\\
 E^{i_0i_0}(a_0)-E^{11}(a_0)&i_0=j_0\leq p,\\
  E^{i_0i_0}(a_0)-(-1)^{1+|a_0|}E^{11}(a_0)&i_0=j_0>p
\end{cases}=\left(\Id-\Inc\circ\Str\right)\left(E^{i_0j_0}(a_0)\right).
\end{multline*}
Thus, the claim is true at the lowest level. Take now an arbitrary $m\in\mathbb{N}$. Let
us compute first $\delta^{m+1}_{m+1}\circ h_m$. We have
\begin{multline*}
\delta^{m+1}_{m+1}\circ h_m\left(E^{i_0i_1}(a_0)\otimes\dots\otimes
E^{i_mi_0}(a_m)\right)=\\ \delta^{m+1}_{m+1}\left(E^{i_01}(a_0)\otimes
E^{11}(a_1)\otimes\dots\otimes E^{11}(a_m)\otimes E^{1i_0}(1)\right)=\\
=
(-1)^{\left|E^{1i_0}(1)\right|\left(\left|E^{i_01}(a_0)\right|+\sum\limits^{m}_{k=1}|a_k|\right)}
E^{11}(a_0)\otimes E^{11}(a_1)\otimes\dots\otimes E^{11}(a_m)=\\ =
\begin{cases}
E^{11}(a_0)\otimes E^{11}(a_1)\otimes\dots\otimes E^{11}(a_m),& i_0\leq p,\\
(-1)^{1+\sum\limits^{m}_{k=0}|a_k|} E^{11}(a_0)\otimes E^{11}(a_1)\otimes\dots\otimes
E^{11}(a_m),& i_0>p.
\end{cases}
\end{multline*}
On the other hand,
\begin{equation*}
\Str^m\left(E^{i_0i_1}(a_0)\otimes\dots\otimes E^{i_mi_0}(a_m)\right)=
\begin{cases}
a_0\otimes\dots\otimes a_m, & i_0\leq p,\\
(-1)^{1+\sum\limits^m_{k=0}|a_k|}a_0\otimes\dots\otimes a_m,& i_0>p.
\end{cases}
\end{equation*}
It follows then, immediately, that
\begin{equation*}
\delta^{m+1}_{m+1}\circ h_m=\Inc^m\circ\Str^m.
\end{equation*}
Moreover, we have
\begin{equation*}
\begin{split}
\delta^{m+1}_0\circ h_0\left(E^{i_0i_1}(a_0)\otimes\dots\otimes E^{i_mi_0}(a_m)\right)=\\
=\delta^{m+1}_0\left(E^{i_01}(a_0)\otimes E^{1i_1}(1)\otimes
E^{i_1i_2}(a_1)\otimes\dots\otimes E^{i_mi_0}(a_m)\right)=\\
E^{i_0i_1}(a_0)\otimes\dots\otimes E^{i_mi_0}(a_m)=\\
=\Id\left(E^{i_0i_1}(a_0)\otimes\dots\otimes E^{i_mi_0}(a_m)\right).
\end{split}
\end{equation*}
Now, exactly as in the classical case, one verifies immediately that if $1\leq l\leq m$ then
\begin{equation*}
\delta^{m+1}_l\circ h_l=\delta^{m+1}_l\circ h_{l-1},
\end{equation*}
while, if $k<l\leq m$, then
\begin{equation*}
\delta^{m+1}_k\circ h_l= h_{l-1}\circ \delta^{m}_k
\end{equation*}
and, also, if $k\geq l$, then
\begin{equation*}
\delta_k^{m+1}\circ h_l=h_l\circ \delta ^m_{k-1}.
\end{equation*}
To summarize, we have the following set of relations:
\begin{align}
\delta_{m+1}^{m+1}\circ h_m&=\Inc^m\circ \Str^m;\label{rel1}\\
\delta_0^{m+1}\circ h_0&=\Id;\label{rel2}\\
\delta_l^{m+1}\circ h_l&=\delta_l^{m+1}\circ h_{l-1},\quad \text{if $1\leq l\leq m$};\label{rel3}\\
\delta^{m+1}_k\circ h_l&=h_{l-1}\circ\delta^m_k,\quad \text{if $k<l\leq m$};\label{rel4}\\
\delta_k^{m+1}\circ h_l&=h_l\circ\delta_{k-1}^m,\quad \text{if $k\geq l+2$}.\label{rel5}.
\end{align}
We have everything we need to prove our assertion:
\begin{equation*}
\begin{split}
d^{m+1}\circ h^m+h^m\circ
d^m&=\sum^{m+1}_{k=0}\sum_{l=0}^m(-1)^{k+l}\delta^{m+1}_k\circ h^m_l+\sum
_{k=0}^m\sum _{l=0}^{m-1}(-1)^{k+l}h^m_l\circ \delta^m_k=\\
&=\delta_0^{m+1}\circ h_0-\delta_{m+1}^{m+1}\circ h_m +\sum\limits_{l=1}^m\delta_l^{m+1}\circ h_l-\sum\limits_{l=1}^m\delta_l^{m+1}\circ h_{l-1}+\\
&+\sum\limits_{k=0}^{l-1}\sum_{l=1}^m(-1)^{k+l}\delta_k^{m+1}\circ h_l+\sum\limits_{k=l+2}^{m+1}\sum_{l=0}^{m-1}(-1)^{k+l}\delta_k^{m+1}\circ h_{l-1}+\\
&+\sum
_{k=0}^m\sum _{l=0}^{m-1}(-1)^{k+l}h^m_l\circ \delta^m_k=\Id-\Inc^m\circ \Str^m+\\
&+\sum\limits_{k=0}^{l-1}\sum_{l=1}^m(-1)^{k+l}h_{l-1}\circ \delta_k^{m}+\sum\limits_{k=l+2}^{m+1}\sum_{l=0}^{m-1}(-1)^{k+l}h_l\circ\delta_{k-1}^{m}+\\
&+\sum
_{k=0}^m\sum _{l=0}^{m-1}(-1)^{k+l}h^m_l\circ \delta^m_k=\Id-\Inc^m\circ \Str^m\\
&-\sum\limits_{k=0}^{p}\sum_{p=0}^{m-1}(-1)^{k+p}h_{p}\circ \delta_k^{m}-\sum\limits_{k=p+1}^{m}\sum_{p=0}^{m-1}(-1)^{k+p}h_p\circ\delta_{k}^{m}+\\
&+\sum
_{k=0}^m\sum _{l=0}^{m-1}(-1)^{k+l}h^m_l\circ \delta^m_k=\Id-\Inc^m\circ \Str^m-\\
&-\sum
_{k=0}^m\sum _{p=0}^{m-1}(-1)^{k+p}h^m_p\circ \delta^m_k+\sum
_{k=0}^m\sum _{l=0}^{m-1}(-1)^{k+l}h^m_l\circ \delta^m_k=\\
&=\Id-\Inc^m\circ \Str^m,
\end{split}
\end{equation*}
where we used the relations~(\ref{rel1})~--~(\ref{rel5}).
Thus, we have a quasi-isomorphisms between the two chain complexes, which means
that the two Hochschild homologies are isomorphic.
\end{proof}
\section{Final remarks} The basic ideas of these proof are ``super''-versions of the classical, ungraded proof (see \cite{rosenberg}). They amount to an unpublished result of R.K. Dennis (whence the name). We notice, however, to avoid confusions, that the term ``Dennis trace'' is also used for another map (related to the generalized trace, also introduced by Dennis), establishing a connection between  algebraic K-theory and Hochschild homology (see~\cite{rosenberg}).

The Dennis supertraces can be used, as well, to provide a proof of the Morita invariance of the cyclic homology of the superalgebras (see~\cite{blaga2}). We also managed to prove, recently, the general Morita invariance of Hochschild homology of superalgebras, not only for the case of supermatrices (see~\cite{blaga1}). We used there a spectral sequence argument. Probably the more ``economical'' tools used by McCarthy (\cite{mccarthy}, see also the book of Loday~\cite{loday}) can be adapted, as well, to the super-case.

\section*{Acknowledgments} This paper was started when the author was a postdoc at the Universit\'a Politecnica delle Marche from Ancona, in the framework of the European Research Training Network "Geometric Analysis". The author would like to acknowledge both the financial support of the European Commission and the human and scientific support of Professor Nicolae Teleman.

\end{document}